\newtheorem{thm}{Theorem}[section]
\newtheorem{cor}[thm]{Corollary}
\newtheorem{lem}[thm]{Lemma}
\newtheorem{trm}[thm]{Theorem}
\newtheorem{rem}[thm]{Remark}
\begin{document}

\title{On the gap of finite metric spaces of $p$-negative type}
\author{Reinhard Wolf}
\date{}

\maketitle
\begin{center} Universit\"{a}t Salzburg, Fachbereich Mathematik, Austria
\end{center}

\begin{abstract}
Let $(X,d)$ be a metric space of $p$-negative type. Recently I.
Doust and A. Weston introduced a quantification of the $p$-negative
type property, the so called gap $\Gamma$ of $X$. \newline This
paper gives some formulas for the gap $\Gamma$ of a finite metric
space of strict $p$-negative type and applies them to evaluate
$\Gamma$ for some concrete finite metric spaces.
\end{abstract}

\section{Introduction}
\label{}

Let $(X,d)$ be a metric space and $p\geq0$. Recall that $(X,d)$ has
$p$-negative type if for all natural numbers $n$, all $x_1, x_2,
\ldots, x_n$ in $X$ and all real numbers $\alpha_1, \alpha_2,
\ldots, \alpha_n$ with $\alpha_1+\alpha_2+\ldots+\alpha_n=0$ the
inequality
    \begin{displaymath}
    \sum^{n}_{i,j=1}\alpha_i\alpha_jd(x_i,x_j)^p\leq0
    \end{displaymath}
holds.\\

Moreover if $(X,d)$ has $p$-negative type and
    \begin{eqnarray*}
    \sum^{n}_{i,j=1}\alpha_i\alpha_jd(x_i,x_j)^p=0, \\
    \textnormal{together with} \ x_i\neq x_j, \ \textnormal{for all} \ i\neq j
    \end{eqnarray*}
    \\
implies $\alpha_1=\alpha_2=\ldots=\alpha_n=0$, then $(X,d)$ has
strict $p$-negative type. ($d(x,y)^0$ is defined to be $0$ if
$x=y$).\\

Following [2] and [3] we define the $p$-negative type gap
$\Gamma^p_X$ (=$\Gamma$ for short) of a $p$-negative type metric
space $(X,d)$ as the largest nonnegative constant, such that
    \begin{displaymath}
    \frac{\Gamma}{2}\left(\sum^{n}_{i=1}|\alpha_i|\right)^2+\sum^n_{i,j=1}\alpha_i\alpha_jd(x_i,x_j)^p\leq0
    \end{displaymath}
holds for all natural numbers $n$, all $x_1,x_2,\ldots,x_n$ in $X$
and all real numbers $\alpha_1,\alpha_2,\ldots,\alpha_n$ with
$\alpha_1+\alpha_2+\ldots+\alpha_n=0$.\\

For basic information on $p$-negative type spaces (1-negative type
spaces are also known as quasihypermetric spaces) see for example
[2], [4], \ldots, [9].\\

This paper explores formulas for the $p$-negative type gap of a
finite $p$-negative type metric space and applies them to calculate
the 1-negative type gap $\Gamma$ of a cycle graph with $n$ vertices
(considered as a finite metric space with the usual path length
metric). It is shown that
    \begin{displaymath}
    \Gamma=\left\{ \begin{array}{cll}
    0 & & , \textnormal{if $n$ is even}\\
    \frac{1}{2} & \cdot\frac{n}{n^2-2n-1} & , \textnormal{if $n$ is
    odd.}
    \end{array} \right.
    \end{displaymath}
Moreover we present short proofs for the evaluation of the
1-negative type gap of a finite discrete metric space, done by A.
Weston in [9], and of a finite metric tree, done by I. Doust and A.
Weston in [2]. I. Doust and A. Weston showed the surprising result,
that the gap of a finite metric tree only depends on the weights
associated to the edges of the tree.

\section{Notation}
\label{}

For a given real $m\times n$ matrix $A$ we denote by $A^T$ the
transposed matrix of $A$ and by $A^{-1}$ the inverse matrix of $A$,
if it exists. Elements $x$ in $\mathbb{R}^n$ are interpretated as
column vectors, so $x^T=(x_1,x_2, \ldots,x_n)$. The canonical inner
product of two elements $x,y$ in $\mathbb{R}^n$ is given by $(x|y)$
and the canonical unit vectors are denoted by $e_1,e_2,\ldots,e_n$.
The element $\underline{1}$ in $\mathbb{R}^n$ is defined as
$\underline{1}^T=(1,1,\ldots, 1)$. The linear span and convex hull
of a subset $M$ in $\mathbb{R}^n$ are denoted by $[M]$ and conv $M$.
Further let $\ker T$ be the kernel of a given linear map $T$.\\

If $E$ is a linear subspace of $\mathbb{R}^n$ and $\|.\|$ is a norm
on $E$ we denote by $\|.\|^*$ the dual norm of $\|.\|$ on $E$ with
respect to the canonical inner product, e.g.
    \begin{displaymath}
    \|x\|^*=\sup_{y\in E,\|y\|\leq1}|(x|y)|
    \end{displaymath}
For $p\geq1$ we let $\|x\|_p$ be the usual $p$-norm of some element
$x$ in $\mathbb{R}^n$. For a given real symmetric \newline $n\
\times\  n$  matrix $A$ which is positive semi-definite on a linear
subspace $E$ of $\mathbb{R}^n ((Ax|x)\geq0, \ \textnormal{for all} \
x \ \textnormal{in}\  E)$ we define the resulting semi-inner product
on $E$ by
    \begin{displaymath}
    (x|y)_A=(Ax|y); \quad x,y \ \textnormal{in} \ E.
    \end{displaymath}
Further the semi-norm $\|x\|_A$ of some element $x$ in $E$ is given
by
    \begin{displaymath}
    \|x\|^2_A=(Ax|x).
    \end{displaymath}

\section{The results}
\label{}

Let $E$ be a linear subspace of $\mathbb{R}^n$ and $A$ be a real
symmetric $n\times n$ matrix of negative (strict negative) type on
$E$, e.g.
    \begin{eqnarray*}
    (Ax|x)\leq0,\quad \textnormal{for all $x$ in $E$ and}\\
    (Ax|x)<0,\quad \textnormal{for all $x\neq 0$ in $E$ resp.}
    \end{eqnarray*}
For further discussion it is useful to define the negative type gap
$\Gamma_{A,E}$ (=$\Gamma$ for short) of $A$ on $E$ as the largest
nonnegative constant, such that
    \begin{displaymath}
    \frac{\Gamma}{2}\|x\|^2_1+(Ax|x)\leq0
    \end{displaymath} holds for all $x$ in $E$. If $A$ is of strict negative type on
$E$, this is equivalent to
    \begin{displaymath}
   (\ast) \quad \left(\frac{2}{\Gamma}\right)^{\frac{1}{2}}=
    \sup_{x\in E,\|x\|_{-A}\leq1} \|x\|_1.
    \end{displaymath}
Note that in this case $\Gamma>0$, since norms are equivalent on
$\mathbb{R}^n$.\\

To continue fix some $u\neq0$ in $\mathbb{R}^n$ and let
    \begin{displaymath}
    F_\alpha=\{x\in\mathbb{R}^n|(x|u)=\alpha\}, \quad \alpha \
    \textnormal{in} \
    \mathbb{R}.
    \end{displaymath}
For short let $F=F_0$. \\
Furthermore let $A$ be a real symmetric $n\times n$ matrix of
negative type on $F$ and not of negative type on $\mathbb{R}^n$
(note that this condition is equivalent to $A$ is of negative type
on $F$ and there is some $w$ in $F_1$ with $(Aw|w)>0$). Following
some ideas of [5], \ldots, [8] we define $M_{F_1}(A)(=M \
\textnormal{for short})$ as
    \begin{displaymath}
    M=\sup_{x\in F_1} \ (Ax|x)(>0).
    \end{displaymath}

\begin{thm} Let $A$ be a real symmetric $n\times n$ matrix of negative
type on $F$ and not of negative type on $\mathbb{R}^n$. Further let
$M=\sup_{x\in F_1} \ (Ax|x)$.\\
We have
\begin{enumerate}
\item $A$ is of strict negative type on $F$ if and only if $A$ is
nonsingular and $(A^{-1}u|u)\neq0$.
\item If $A$ is of strict negative type on $F$ then we have
\begin{enumerate}
\item there is a unique (maximal) element $z$ in $F_1$ such that
$M=(Az|z)$.
\item $Az=Mu$ and $M=(A^{-1}u|u)^{-1}$.
\end{enumerate}
\end{enumerate}
\end{thm}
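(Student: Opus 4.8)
\emph{Proof plan.} The plan is to exploit the orthogonal decomposition $\mathbb{R}^n=F\oplus[u]$, which holds because $F=\{x:(x|u)=0\}=u^{\perp}$, together with a Cauchy--Schwarz argument for the positive semi-definite form $-A$ on $F$. The one observation I would isolate first is: if $A$ is of negative type on $F$ and $x\in F$ satisfies $(Ax|x)=0$, then $Ax\in[u]$. Indeed, $-A$ induces a semi-inner product on $F$, so Cauchy--Schwarz gives $(Ax|y)^{2}\le(Ax|x)(Ay|y)=0$ for every $y\in F$; hence $Ax\perp F$, i.e.\ $Ax\in F^{\perp}=[u]$.

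For statement~1, the direction ``$\Leftarrow$'' then goes as follows. Assume $A$ nonsingular with $(A^{-1}u|u)\neq0$ and take $x\in F$ with $(Ax|x)=0$; by the observation $Ax=\lambda u$ for some $\lambda\in\mathbb{R}$. If $\lambda=0$ then $x=0$ by nonsingularity; if $\lambda\neq0$ then $x=\lambda A^{-1}u$, and $0=(x|u)=\lambda(A^{-1}u|u)$ forces $(A^{-1}u|u)=0$, a contradiction. Hence $x=0$, so $A$ is of strict negative type on $F$. For ``$\Rightarrow$'', assume strict negative type on $F$. To see $A$ is nonsingular, let $x\in\ker A$, $x\neq0$: if $(x|u)=0$ then $x\in F$ and $(Ax|x)=0$ with $x\neq0$, impossible; otherwise rescale so that $x\in F_{1}$, pick $w\in F_{1}$ with $(Aw|w)>0$ (which exists by the standing hypothesis), note $x-w\in F$, and compute, using $Ax=0$ and symmetry of $A$, $(A(x-w)|x-w)=(Aw|w)>0$, contradicting negative type on $F$. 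So $A$ is nonsingular, and then $v:=A^{-1}u$ satisfies $(Av|v)=(u|A^{-1}u)=(A^{-1}u|u)$; if this vanished, then $v\in F$, $v\neq0$, $(Av|v)=0$ would again contradict strict negative type, so $(A^{-1}u|u)\neq0$.

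For statement~2, part~1 makes $z:=(A^{-1}u|u)^{-1}A^{-1}u$ well defined. Clearly $(z|u)=1$, so $z\in F_{1}$, and $Az=(A^{-1}u|u)^{-1}u$, whence $(Az|z)=(A^{-1}u|u)^{-1}$. For any $x\in F_{1}$ one has $x-z\in F$ and $(Az|x-z)=(A^{-1}u|u)^{-1}(u|x-z)=0$, so expanding the quadratic form (and using symmetry of $A$ to handle the cross term) yields $(Ax|x)=(Az|z)+(A(x-z)|x-z)\le(Az|z)$, with equality only when $(A(x-z)|x-z)=0$, i.e.\ (strict negative type) only when $x=z$. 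Thus the supremum $M$ is attained, uniquely, at $z$, which proves 2(a), and $M=(Az|z)=(A^{-1}u|u)^{-1}$ together with $Az=(A^{-1}u|u)^{-1}u=Mu$ proves 2(b).

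The only step I expect to require real care is the nonsingularity claim in ``$\Rightarrow$'' of statement~1: strict negative type only controls vectors lying in $F$, so one must feed in the witness $w\in F_{1}$ with $(Aw|w)>0$ to dispose of a kernel vector meeting $F_{1}$. Everything else is a direct computation once the splitting $\mathbb{R}^n=F\oplus[u]$ and the Cauchy--Schwarz observation are available.
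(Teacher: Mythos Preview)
Your proof is correct and follows essentially the same route as the paper's: the Cauchy--Schwarz observation that $(Ax|x)=0$ for $x\in F$ forces $Ax\in[u]$, the use of a witness $w\in F_{1}$ with $(Aw|w)>0$ to rule out kernel vectors not in $F$, and the explicit candidate $z=(A^{-1}u|u)^{-1}A^{-1}u$ together with the expansion $(Ax|x)=(Az|z)+(A(x-z)|x-z)$ for uniqueness of the maximizer. The only cosmetic differences are that you isolate the Cauchy--Schwarz step up front and treat the two cases for $\ker A$ in the opposite order.
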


\begin{proof}
Assume first that $A$ is of strict negative type on $F$ and let
$Ax=0$ for some $x$ in $\mathbb{R}^n$. Choose some $w$ in $F_1$ with
$(Aw|w)>0$. If $(x|u)\neq0$, we get
$\displaystyle(A(w-\frac{x}{(x|u)})|w-\frac{x}{(x|u)})\leq0$ and
hence $(Aw|w)\leq0$, a contradiction. Therefore we have $x$ in $F$
and so $x=0$, which shows that $A$ is nonsingular. Now let $y$ in
$\mathbb{R}^n$ be the unique element with $Ay=u$.\\
If $y$ in $F$ we obtain $(Ay|y)=0$ and hence $y=0$, a contradiction.
Therefore we have $(A^{-1}u|u)=(y|u)\neq0$. Let $\displaystyle
z=\frac{1}{(y|u)}y, z$ in $F_1$. So $\displaystyle
Az=\frac{1}{(y|u)}u$ and it follows that for all $x$ in $F_1$,
$x\neq z$ we get $(A(x-z)|x-z)<0$ and hence $\displaystyle
(Ax|x)<\frac{1}{(y|u)}$. Since $\displaystyle(Az|z)=\frac{1}{(y|u)}$
we get $\displaystyle M=\frac{1}{(y|u)}=\frac{1}{(A^{-1}u|u)}$ and
$Az=Mu$.\newline It remains to show that $A$ nonsingular and
$(A^{-1}u|u)\neq0$ implies that $A$ is of strict negative type on
$F$. \newline Let $(Ax|x)=0$ for some $x$ in $F$. Since
$|(Ax|y)|^2\leq(Ax|x)(Ay|y)$ for all $y$ in $F$ we get
    \begin{displaymath}
    Ax=\lambda u, \quad \textnormal{for some}\  \lambda\  \textnormal{in}\
     \mathbb{R}.
    \end{displaymath}
Hence $0=(x|u)=\lambda(A^{-1}u|u)$ \\
and therefore $\lambda=0$, which implies $Ax=0$ and so $x=0$.
\end{proof}

The following application was done in [8] (Theorem 2.11) for finite
metric spaces of 1-negative type (finite quasihypermetric spaces).

\begin{cor}
Let $(X,d)$ with $X=\{x_1,x_2,\ldots,
    x_n\}$ be a finite metric space of $p$-negative type of at
least two points. $(X,d)$ is of strict $p$-negative type if and only
if
    \begin{displaymath}
    A=\left(d(x_i, x_j)^p\right)^n_{i,j=1}
    \end{displaymath}
is nonsingular and $(A^{-1}\underline{1}|\underline{1})\neq0$.
\end{cor}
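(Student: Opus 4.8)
The plan is to read the Corollary off Theorem 3.1 applied with $u=\underline{1}$ and $A=\bigl(d(x_i,x_j)^p\bigr)^n_{i,j=1}$. With this choice, $F=F_0=\{\alpha\in\mathbb{R}^n\mid(\alpha|\underline 1)=0\}$ is exactly the set of weight vectors $\alpha^T=(\alpha_1,\dots,\alpha_n)$ with $\alpha_1+\dots+\alpha_n=0$, and for such $\alpha$ one has $(A\alpha|\alpha)=\sum^n_{i,j=1}\alpha_i\alpha_j d(x_i,x_j)^p$. Thus, unwinding the definition, the hypothesis ``$(X,d)$ is of $p$-negative type'' says precisely that $A$ is of negative type on $F$.

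First I would check that the standing hypotheses of Theorem 3.1 are met, i.e. that $A$ is not of negative type on all of $\mathbb{R}^n$. Since $X$ has at least two points there are indices $i\neq j$ with $x_i\neq x_j$, and then $w=\tfrac12(e_i+e_j)$ lies in $F_1$ (as $(w|\underline 1)=1$) and satisfies $(Aw|w)=\tfrac14\bigl(A_{ii}+2A_{ij}+A_{jj}\bigr)=\tfrac12 d(x_i,x_j)^p>0$. By the equivalence recorded parenthetically just before the definition of $M$, this is exactly the condition ``$A$ is of negative type on $F$ but not on $\mathbb{R}^n$''; in particular $M>0$, so Theorem 3.1 applies. (This is also the step that uses the ``at least two points'' assumption, which is genuinely needed: for a one-point space $A=(0)$ is singular yet the space is trivially of strict $p$-negative type.)

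Next I would match up the two notions of strictness. Because the listed points $x_1,\dots,x_n$ are pairwise distinct, specializing the definition of strict $p$-negative type to these $n$ points says precisely that $(A\alpha|\alpha)=0$ with $\alpha\in F$ forces $\alpha=0$, i.e. $A$ is of strict negative type on $F$. Conversely, any finite configuration of points of $X$ that witnesses a failure of strict $p$-negative type must consist of \emph{distinct} points (that is built into the definition), hence is a sub-configuration of $\{x_1,\dots,x_n\}$; padding its weight vector with zeros in the remaining coordinates yields a nonzero $\alpha\in F$ with $(A\alpha|\alpha)=0$, so $A$ fails to be of strict negative type on $F$. Therefore ``$(X,d)$ is of strict $p$-negative type'' $\iff$ ``$A$ is of strict negative type on $F$''.

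With these identifications, part (1) of Theorem 3.1 (with $u=\underline 1$) gives at once that $A$ is of strict negative type on $F$ if and only if $A$ is nonsingular and $(A^{-1}\underline 1|\underline 1)\neq0$, which is exactly the assertion of the Corollary. I do not anticipate a real obstacle; the only points needing care are the bookkeeping in the previous paragraph — passing from the ``for all $m$, all $y_1,\dots,y_m$'' quantifiers in the definitions to the single $n\times n$ matrix $A$, using distinctness of the $x_i$ — and the verification in the second paragraph that the hypothesis of two or more points is precisely what makes $A$ fail to be of negative type on $\mathbb{R}^n$.
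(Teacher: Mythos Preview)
Your argument is correct and follows exactly the paper's approach: set $u=\underline{1}$, exhibit $w=\tfrac12(e_i+e_j)\in F_1$ with $(Aw|w)=\tfrac12 d(x_i,x_j)^p>0$ to ensure $A$ is not of negative type on $\mathbb{R}^n$, and then invoke Theorem~3.1(1). You simply spell out in more detail the identifications (in particular the equivalence between strict $p$-negative type of $(X,d)$ and strict negative type of $A$ on $F$) that the paper leaves implicit.
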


\begin{proof}
Take $u=\underline{1}$, and note that
$\displaystyle(Aw|w)=\frac{d(x_i,x_j)^p}{2}>0$, for $\displaystyle
w=\frac{e_i+e_j}{2}$ in $F_1$ and $x_i\neq x_j$ and so we are done
by Theorem 3.1, part 1.
\end{proof}

Now let $A$ be of strict negative type on
    \begin{displaymath}
    F=\{x\in \mathbb{R}^n|(x|u)=0\}, \quad u\neq0 \ \textnormal{in}
    \ \mathbb{R}^n.
    \end{displaymath}
By Theorem 3.1 we know that $M=\sup_{x\in F_1}(Ax|x)$ is finite and
there is a unique (maximal) element $z$ in $F_1$, such that
$M=(Az|z)$ and $Az=Mu$. \\
Define
    \begin{displaymath}
    C=Muu^T-A.
    \end{displaymath}
Again by Theorem 3.1, $C$ is positive semi-definite on
$\mathbb{R}^n$ with $\ker C=[z]$. Therefore we can extend the inner
product $(.|.)_{-A}$ defined on $F$ to a semi-inner product on
$\mathbb{R}^n$ given by
    \begin{displaymath}
    (x|y)_C=(Cx|y), \quad \textnormal{for} \ x,y \ \textnormal{in} \
    \mathbb{R}^n.
    \end{displaymath}
Furthermore we define
    \begin{displaymath}
    B=\frac{1}{M}zz^T-A^{-1}.
    \end{displaymath}
Since $(BAx|Ax)=(Cx|x)$, for all $x$ in $\mathbb{R}^n$, it follows
that
$B$ is positive semi-definite on $\mathbb{R}^n$ with $\ker B=[u]$.\\

Before formulating the next lemma, dealing with dual norms on $F$,
we define for $x^T=(x_1, x_2, \ldots, x_n)$ in $\mathbb{R}^n$ the
oscillation of $x$ with respect to $u$ as
    \begin{displaymath}
    o(x)=\max\left(\max_{i,j\in \ \textnormal{supp}\
    u}\frac{|u_ix_j-u_jx_i|}{|u_i|+|u_j|},\max_{i \notin\
    \textnormal{supp}
    \ u}|x_i|\right),
    \end{displaymath}
where
    \begin{displaymath}
    \textnormal{supp} \ u=\{1\leq i\leq n|u_i\neq0\}.
    \end{displaymath}
Note that $o(.)$ defines a semi-norm on $\mathbb{R}^n$ and $o(x)=0$
if and only if $x$ in $[u]$.

\begin{lem}
We have
\begin{enumerate}
\item The dual norm of $\|.\|_1$ on $F$ is given by $\|.\|^*_1=o(x)$, for all $x$ in $F$.
\item The dual norm of $\|.\|_{-A}$ on $F$ is given by
$\|x\|^*_{-A}=\|x\|_B$, for all $x$ in $F$.
\item $\{x\in F| \|x\|^*_1\leq1\}=\ \textnormal{conv} \ E$, where
    \begin{displaymath}
    E=\left\{x-\frac{(x|u)}{\|u\|^2_2}u, x\in\{-1,1\}^n\right\}.
    \end{displaymath}
\end{enumerate}
\end{lem}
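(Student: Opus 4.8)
The plan is to prove the three parts in order, since part 3 is essentially a reformulation of part 1.

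\medskip

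\textbf{Part 1.} I want to compute $\|x\|_1^* = \sup\{(x|y) : y\in F,\ \|y\|_1\le 1\}$ for $x\in F$. The unit ball of $\|\cdot\|_1$ in $\mathbb{R}^n$ is $\mathrm{conv}\{\pm e_1,\dots,\pm e_n\}$, so its intersection with the hyperplane $F=u^\perp$ is the convex hull of the points where the segments $[\pm e_i, \pm e_j]$ (and the vertices $\pm e_i$ themselves, when $u_i=0$) meet $F$. Concretely, if $i,j\in\mathrm{supp}\,u$, the segment joining $\mathrm{sgn}(u_i)\,e_i$-type vertices to $F$ gives extreme points of the form $\frac{|u_j|(\pm e_i) + |u_i|(\mp e_j)}{|u_i|+|u_j|}$ after adjusting signs so the $u$-inner product vanishes; if $i\notin\mathrm{supp}\,u$, then $\pm e_i\in F$ already. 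Evaluating $(x|\cdot)$ at these finitely many extreme points and taking the max yields exactly $\max\big(\max_{i,j\in\mathrm{supp}\,u}\frac{|u_ix_j - u_jx_i|}{|u_i|+|u_j|},\ \max_{i\notin\mathrm{supp}\,u}|x_i|\big)=o(x)$. The one point needing care: one must check that these candidate points are genuinely the extreme points of the sliced ball $\{\|y\|_1\le1\}\cap F$, i.e. that no other convex combination of the $\pm e_k$ lying in $F$ gives a larger value; this follows because the $\ell_1$-ball is a polytope and slicing a polytope by a hyperplane through interior points yields a polytope whose vertices lie on edges of the original. I expect this verification — correctly enumerating the sliced vertices and handling the sign bookkeeping inside the absolute value $|u_ix_j-u_jx_i|$ — to be the main obstacle.

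\medskip

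\textbf{Part 2.} Since $A$ is of strict negative type on $F$, $-A$ is a genuine inner product on $F$, and by the construction preceding the lemma, $B=\frac1M zz^T - A^{-1}$ is positive semidefinite on $\mathbb{R}^n$ with $\ker B=[u]$, hence positive definite on any complement of $[u]$, in particular we can use it on $F$. The key identity already recorded is $(BAx|Ax)=(Cx|x)$ for all $x\in\mathbb{R}^n$. I would argue as follows: for $x\in F$, write $\|x\|_{-A}^* = \sup\{(x|y): y\in F,\ (-Ay|y)\le 1\}$. Substitute $v = (-A)^{1/2}$-type change of variables — more cleanly, since $-A$ is invertible on $F$, put $y = (-A)^{-1}w$ restricted appropriately; then $(x|y) = (x|(-A)^{-1}w)$ and the constraint becomes $((-A)^{-1}w|w)\le 1$. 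The supremum of a linear functional over an ellipsoid is the dual-ellipsoid norm: $\|x\|_{-A}^* = \big(((-A)^{-1})^{\dagger}$-form applied to $x\big)^{1/2}$, where the inverse is taken on $F$. It remains to identify the inverse of $(-A)|_F$ acting through the ambient space with $B$: precisely, for $x,y\in F$ one checks $(Bx|y)$ equals the $-A$-dual pairing, using $B = \frac1M zz^T - A^{-1}$ and that $A^{-1}u = $ (scalar)$\,z$ with $z\in F_1$, so the $\frac1M zz^T$ correction term exactly kills the component of $A^{-1}$ that points out of $F$. Thus $\|x\|_{-A}^* = \|x\|_B$ on $F$.

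\medskip

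\textbf{Part 3.} This is the geometric restatement of part 1. By part 1, $\{x\in F: \|x\|_1^*\le 1\} = \{x\in F: o(x)\le 1\}$. On the other hand, $\|x\|_1^*\le 1$ means $|(x|y)|\le 1$ for all $y$ in the $\ell_1$-unit ball intersected with $F$ — equivalently, since that sliced ball is $\mathrm{conv}$ of the finite vertex set $V$ described in part 1, it means $(x|v)\le 1$ for all $v\in V$. Dualizing again, $\{x\in F:\ (x|v)\le 1\ \forall v\in V\}$ is the polar within $F$ of $\mathrm{conv}\,V$, which — because $F$ is a subspace and the configuration is symmetric — equals $\mathrm{conv}$ of the "vertex set of the polar." I would instead argue directly: the bipolar theorem gives $\{x\in F:\|x\|_1^*\le1\}^{\circ_F} = \{y\in F:\|y\|_1\le1\}$, and one checks that the extreme points of $\{x\in F:\|x\|_1^*\le1\}$ are exactly the projections onto $F$ of the vertices $\{-1,1\}^n$ of the cube, namely $x - \frac{(x|u)}{\|u\|_2^2}u$ for $x\in\{-1,1\}^n$, which is the set $E$. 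Hence $\{x\in F:\|x\|_1^*\le1\} = \mathrm{conv}\,E$. The verification that $E$ gives precisely the extreme points (no spurious ones, none missing) is the only delicate part, and it reduces to the same sign-combinatorics as in part 1.
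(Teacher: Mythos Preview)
Your Part~1 is the paper's argument: slice the cross-polytope by $F$, enumerate the resulting extreme points, and evaluate.

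For Part~2 you take a different route from the paper. The paper uses a direct Cauchy--Schwarz estimate: for $x,y\in F$ one writes $(x|y)^2=(A^{-1}x|y)_C^2\le (CA^{-1}x|A^{-1}x)(Cy|y)=(Bx|x)\,\|y\|_{-A}^2$ (via the identity $(BAw|Aw)=(Cw|w)$ with $w=A^{-1}x$), and then exhibits the explicit witness $y_0=\frac{(x|z)}{M}z-A^{-1}x=Bx$ attaining equality. Your plan---recognise $\|\cdot\|_{-A}^*$ as the norm associated to the inverse of $(-A)|_F$, then identify that inverse with $B|_F$---is correct: from $-ABx=x-(x|z)u$ one sees $Bx\in F$ and $P_F(-A)Bx=x$ for $x\in F$, so $B|_F$ really is $((-A)|_F)^{-1}$, and the optimum of your ellipsoid problem is precisely the paper's $y_0$. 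The paper's computation is shorter because it never discusses inverses on subspaces; your approach has the virtue of explaining where $y_0$ comes from.

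Part~3, however, has a genuine gap. Your claim that the extreme points of $\{x\in F:\|x\|_1^*\le 1\}$ are \emph{exactly} the set $E$ is false: for $n=2$ and $u=(1,1)^T$, the cube vertices $(1,1)$ and $(-1,-1)$ both project to $0$, which lies in $E$ but is not an extreme point of anything. What is true is only that the extreme points are \emph{contained} in $E$, but you neither state this correctly nor indicate how to prove it, and the bipolar theorem you invoke does not supply it. The paper sidesteps the whole issue with a two-line norming-set argument: first $E\subseteq\{x\in F:o(x)\le 1\}$, since $o\big(x-\tfrac{(x|u)}{\|u\|_2^2}u\big)=o(x)\le 1$ for $x\in\{-1,1\}^n$; second, for any $y\in F$ pick $\alpha\in\{-1,1\}^n$ with $(y|\alpha)=\|y\|_1$ and observe that the $F$-projection of $\alpha$ lies in $E$ and still pairs with $y$ to $\|y\|_1$ (because $y\perp u$). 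Thus $E=-E$ is a norming set contained in the dual unit ball, whence $\mathrm{conv}\,E$ equals that ball.
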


\begin{proof} We have
\begin{enumerate}
\item[ad. 1] Intersecting $F$ with the edges of the cross-polytope
conv $\{\pm e_i, 1\leq i\leq n\}$ leads to the set $M$ of extreme
points of $\{x\in F| \|x\|_1\leq1\}$, where
    \begin{displaymath}
    M=\left\{\pm\frac{u_ie_j-u_je_i}{|u_i|+|u_j|}, i,j\in \ \textnormal{supp}\ u; \pm e_i, i\notin \ \textnormal{supp} \ u\right\}
    \end{displaymath}
and hence
    \begin{displaymath}
    \|x\|^*_1=o(x), \quad \textnormal{for all}\  x \ \textnormal{in} \ F.
    \end{displaymath}
\item[ad. 2] Let $x,y$ be in $F$.
    \begin{eqnarray*}
    (x|y)^2  =  (A^{-1}x|Ay)^2 & = & (A^{-1}x|y)^2_{-A} =\\
     =  (A^{-1}x|y)^2_C & \leq & (CA^{-1}x|A^{-1}x)(Cy|y) =\\
     =  (Bx|x)(y|y)_{-A} & = & (Bx|x) \|y\|^2_{-A}.
    \end{eqnarray*}
Therefore we get $\displaystyle \|x\|^*_{-A}
\leq(Bx|x)^{\frac{1}{2}}$. Let $\displaystyle
y_0=\frac{(x|z)}{M}z-A^{-1}x$. Note that $y_0$ in $F$ and
$\|y_0\|^2_{-A}=(Bx|x)$ and so
    \begin{displaymath}
    \|x\|^*_{-A}\geq\left(x|\frac{y_0}{(Bx|x)^{\frac{1}{2}}}\right)=(Bx|x)^{\frac{1}{2}}.
    \end{displaymath}
\item[ad. 3] Let $x$ be in $\{-1,1\}^n$.
Now $\displaystyle
o\left(x-\frac{(x|u)}{\|u\|^2_2}u\right)=o(x)\leq1$ and hence
$E\subseteq\{x\in F| \|x\|^*_1\leq1\}$. For a given $y$ in $F$
choose $\alpha^T=(\alpha_1, \alpha_2, \ldots, \alpha_n)$ in
$\{-1,1\}^n$ such that $\|y\|_1=(y|\alpha)$. So
$\displaystyle\left(y|\alpha-\frac{(\alpha|u)u}{\|u\|^2_2}\right)=(y|\alpha)=\|y\|_1$
and hence $\|y\|_1=\sup_{x\in E}(y|x)$, e.g. $E=-E$ is a norming
set. Therefore we obtain conv $E=\{x\in F | \|x\|^*_1\leq1\}$.
\end{enumerate}
\end{proof}

\begin{trm}
Let $u\neq0$ be in $\mathbb{R}^n$ and
$F=\{x\in\mathbb{R}^n|(x|u)=0\}$. Further let $A$ be a real
symmetric $n\times n$ matrix of strict negative type on $F$, and not
of
negative type on $\mathbb{R}^n$.\\
The gap $\Gamma_A(F)(=\Gamma)$ of $A$ on $F$ is given by
$\displaystyle\Gamma=\frac{2}{\beta}$, where
\begin{enumerate}
\item    \begin{eqnarray*}
    \beta=
     \sup_{x\in F,o(Ax)\leq1}(-Ax|x)
     \end{eqnarray*}
\item \begin{eqnarray*}
    \beta =
    \max_{x\in\{-1,1\}^n}(Bx|x)
    \end{eqnarray*}
where
    \begin{displaymath}
    B=(A^{-1}u|u)^{-1}(A^{-1}u)(A^{-1}u)^T-A^{-1}.
    \end{displaymath}
\item $\beta= \|B\|$, where $B$ is defined as in 2. and viewed as a
linear operator from $(\mathbb{R}^n, \|.\|_\infty)$ to
$(\mathbb{R}^n, \|.\|_1)$.
\end{enumerate}
\end{trm}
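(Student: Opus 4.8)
The plan is to put $\beta := 2/\Gamma$, so that the asserted value $\Gamma = 2/\beta$ is a tautology, and then to establish the three formulas for $\beta$. All three come from one master formula. Since $A$ is of strict negative type on $F$, identity $(\ast)$ reads $\beta = \left(\sup_{x\in F,\ \|x\|_{-A}\le 1}\|x\|_1\right)^2$, and in particular $\beta$ is finite and positive because norms on $\mathbb{R}^n$ are equivalent. The first move is to dualize $\|x\|_1$ on $F$. By Lemma 3.3(1) the dual norm of $\|\cdot\|_1$ on $F$ is $o(\cdot)$, so since the double dual of a norm on a finite-dimensional space is the norm itself, $\|x\|_1 = \sup\{(x|w):w\in F,\ o(w)\le 1\}$ for $x\in F$. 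Substituting this, interchanging the two suprema, recognising the inner supremum over $x$ as the dual norm $\|w\|_{-A}^{*}$, and finally invoking Lemma 3.3(2) ($\|w\|_{-A}^{*}=\|w\|_B=(Bw|w)^{1/2}$) turns $(\ast)$ into
$$\beta \;=\; \sup_{w\in F,\ o(w)\le 1}\,(Bw|w),$$
where $B=(A^{-1}u|u)^{-1}(A^{-1}u)(A^{-1}u)^{T}-A^{-1}$; this is the same $B$ as $\tfrac1M zz^{T}-A^{-1}$ from the discussion before Lemma 3.3, because $z=(A^{-1}u|u)^{-1}A^{-1}u$ and $M=(A^{-1}u|u)^{-1}$ by Theorem 3.1, and it is positive semi-definite with $\ker B=[u]$.

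For part (1), substitute $w=P_F(-Ax)=(-A)|_F\,x$ with $x$ ranging over $F$; since $-A$ is positive definite on $F$ this is a bijection of $F$ onto itself. Because $Bu=0$, both $B$ and $o(\cdot)$ are unchanged by the orthogonal projection $P_F$, so $o(w)=o(-Ax)=o(Ax)$; and using the identity $(BAx|Ax)=(Cx|x)$ recorded before Lemma 3.3, together with $(Cx|x)=M(u|x)^{2}-(Ax|x)=-(Ax|x)$ for $x\in F$ (here $C=Muu^{T}-A$), one gets $(Bw|w)=(-Ax|x)$. The master formula then becomes $\beta=\sup_{x\in F,\ o(Ax)\le 1}(-Ax|x)$, which is part (1).

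For part (2), apply Lemma 3.3(3): the feasible set $\{w\in F: o(w)\le 1\}$ equals $\textnormal{conv}\,E$ with $E=\{x-\tfrac{(x|u)}{\|u\|_2^{2}}u : x\in\{-1,1\}^{n}\}$. Since $w\mapsto(Bw|w)$ is convex, its supremum over the polytope $\textnormal{conv}\,E$ is attained at a vertex, hence on $E$; and for $w_x=x-\tfrac{(x|u)}{\|u\|_2^{2}}u$ one has $Bw_x=Bx$ and $(Bw_x|w_x)=(Bx|x)$ because $Bu=0$. So $\beta=\max_{x\in\{-1,1\}^{n}}(Bx|x)$, which is part (2). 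For part (3), $\|B\|_{\ell_\infty\to\ell_1}=\sup_{\|x\|_\infty\le1}\|Bx\|_1=\max_{x\in\{-1,1\}^{n}}\|Bx\|_1=\max_{x,y\in\{-1,1\}^{n}}(Bx|y)$ by convexity of $x\mapsto\|Bx\|_1$ and of $y\mapsto(Bx|y)$; and since $B$ is positive semi-definite, Cauchy--Schwarz for the semi-inner product $(\cdot|\cdot)_B$ gives $(Bx|y)\le\tfrac12\big((Bx|x)+(By|y)\big)\le\max\big((Bx|x),(By|y)\big)$, so this double maximum equals $\max_{x\in\{-1,1\}^{n}}(Bx|x)$, which is $\beta$ by part (2).

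The main obstacle is part (1): one has to keep track of the fact that $Ax$ need not lie in $F$, so that $o(Ax)$ must be read as the ambient seminorm on $\mathbb{R}^n$ — equivalently as $\|P_F(Ax)\|_1^{*}$ — and the change of variables $w\leftrightarrow(-A)|_F\,x$ must be verified to be a bijection of $F$ before the two identities $o(w)=o(Ax)$ and $(Bw|w)=(-Ax|x)$ can be used. Once the master formula is in place, parts (2) and (3) are routine \emph{convex function on a polytope is maximised at a vertex} arguments.
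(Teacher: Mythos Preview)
Your proof is correct and follows essentially the same architecture as the paper's: dualize $(\ast)$ via Lemma~3.3(1)--(2) to reach the master formula $\beta=\sup_{w\in F,\,o(w)\le1}(Bw|w)$, and then read off parts (1)--(3) using Lemma~3.3(3) and the positive semidefiniteness of $B$. The only variation worth noting is in part~(1): the paper first extends the supremum from $F$ to all of $\mathbb{R}^n$, substitutes $w=Ay$ using the invertibility of $A$ on $\mathbb{R}^n$, and then restricts back to $F$ via the decomposition $y=f+\lambda z$ and $\ker C=[z]$, whereas you stay on $F$ throughout by using the bijection $x\mapsto P_F(-Ax)$ of $F$---a slightly more direct rendition of the same change of variables.
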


\begin{proof} We have
\begin{enumerate}
\item[ad. 1] By formula $(*)$ at the beginning of the chapter we have
$\displaystyle\Gamma=\frac{2}{\beta}$ with
    \begin{eqnarray*}
    \beta^{\frac{1}{2}}=
    \sup_{x\in F,\|x\|_{-A}\leq1}\|x\|_1.
    \end{eqnarray*}
By Lemma 3.3, part 1,2 we get
    \begin{displaymath}
    \beta^{\frac{1}{2}}= \sup_{x\in F,\|x\|_{-A}\leq1}\|x\|_1 =  \sup_{x\in F,\|x\|^*_1\leq1} \|x\|^*_{-A} =  \sup_{x\in F,o(x)\leq1} \|x\|_B
    \end{displaymath}
Recall that $\ker B=[u]$ and $o(x+\lambda u)=o(x)$, for all $x$ in
$\mathbb{R}^n$ and $\lambda$ in $\mathbb{R}$. Hence
    \begin{displaymath}
    \beta= \sup_{x\in F,o(x)\leq1}\|x\|^2_B = \sup_{x\in\mathbb{R}^n,o(x)\leq1} \|x\|^2_B = \sup_{y\in\mathbb{R}^n,o(Ay)\leq1} \|Ay\|^2_B,
    \end{displaymath}
since $A$ is nonsingular by Theorem 3.1, part 1. Since
$\|Ay\|^2_B=(Cy|y)$ where $C=Muu^T-A, Az=Mu$ (see Theorem 3.1, part
2) we get
    \begin{displaymath}
    \beta= \sup_{y\in\mathbb{R}^n,o(Ay)\leq1} (Cy|y).
    \end{displaymath}
Since $z$ is not in $F$, we can write each $y$ in $\mathbb{R}^n$ as
$y=f+\lambda z$, for some $f$ in $F$ and $\lambda$ in $\mathbb{R}$.
Recall that $\ker C=[z]$ and $o(Ay)=o(Af+\lambda Mu)=o(Af)$ and so
    \begin{displaymath}
    \beta= \sup_{y\in\mathbb{R}^n,o(Ay)\leq1}(Cy|y)= \sup_{x\in
    F,o(Ax)\leq1}(-Ax|x).
    \end{displaymath}
\item[ad. 2] From above we have
    \begin{displaymath}
    \beta= \sup_{x\in F,\|x\|^*_1\leq1}\|x\|^2_B = \max_{x\in E}(Bx|x)
    \end{displaymath}
by Lemma 3.3, part3, where
    \begin{displaymath}
    E=\left\{x-\frac{(x|u)}{\|u\|^2_2}u, x\in\{-1,1\}^n\right\}.
    \end{displaymath}
Again using the fact, that $\ker B=[u]$ we get
    \begin{displaymath}
    \beta=\max_{x\in\{-1,1\}^n} (Bx|x).
    \end{displaymath}
\item[ad. 3] Recall that $B$ is positive semi-definite on
$\mathbb{R}^n$ and hence for all $x,y$ in $\{-1,1\}^n$ we get
$(Bx|y)^2\leq(Bx|x)(By|y)$ and so
    \begin{displaymath}
    \beta=\max_{x,y\in\{-1,1\}^n}(Bx|y)=\max_{x\in\{-1,1\}^n}\|Bx\|_1=\|B\|.
    \end{displaymath}
\end{enumerate}
\end{proof}

Now let $(X,d)$ be a finite metric space of strict $p$-negative
type, $X=\{x_1, x_2, \ldots,x_n\}, n\geq2$. Let
$A=\left(d(x_i,x_j)^p\right)^n_{i,j=1}$ and $u=\underline{1}$. By
corollary 3.2 we know that $A$ is nonsingular and
$(A^{-1}\underline{1}|\underline{1})\neq0$. Recall that
$\displaystyle(Aw|w)=\frac{d(x_i,x_j)^p}{2}>0$, for $\displaystyle
w=\frac{e_i+e_j}{2}, i\neq j$. Further observe that
$u=\underline{1}$ implies $\displaystyle
o(x)=\max_{i,j}\frac{|x_i-x_j|}{2}$ and $x$ is in $\{-1,1\}^n$ if
and only $x+\underline{1}$ is in $\{0,2\}^n$ with
$o(x)=o(x+\underline{1})$. Applying Theorem 3.3 we get

\begin{trm}
Let $(X,d)$ with $X=\{x_1,x_2, \ldots, x_n\}$ be a finite metric
space of strict $p$-negative type of at least two points. Let
    \begin{displaymath}
A=\left(d(x_i,x_j)^p\right)^n_{i,j=1}
    \end{displaymath}
The $p$-negative type gap $\Gamma$ of $X$ is given by $\displaystyle
\Gamma=\frac{2}{\beta}$, where
\begin{enumerate}
\item
    \begin{displaymath}
    \beta=\sup\left\{(-Ax|x)|x_1+x_2+\ldots+x_n=0 \ \textnormal{and} \ |(Ax|e_i-e_j)|\leq2, \ \textnormal{for all} \  1\leq i,j\leq n\right\}
    \end{displaymath}
\item
    \begin{displaymath}
    \beta=\max_{x\in\{-1,1\}^n}(Bx|x)=4\max_{x\in\{0,1\}^n}(Bx|x), \
    \textnormal{where} \ B=(A^{-1}\underline{1}|\underline{1})^{-1}(A^{-1}\underline{1})(A^{-1}\underline{1})^T-A^{-1}
    \end{displaymath}
\item
    \begin{displaymath}
    \beta=\|B\|,
    \end{displaymath}
where $B$ is defined as in 2. and viewed as a linear operator from
$(\mathbb{R}^n,\|.\|_\infty)$ to $(\mathbb{R}^n, \|.\|_1)$.
\end{enumerate}
\end{trm}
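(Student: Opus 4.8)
The plan is to obtain Theorem 3.4 as the specialization of Theorem 3.3 to $u=\underline{1}$ and $A=\left(d(x_i,x_j)^p\right)^n_{i,j=1}$, so essentially all the work is to check the hypotheses and then translate the three formulas for $\beta$ into the metric‑space language. First I would verify that $A$ satisfies the standing assumptions of Theorem 3.3 for this choice of $u$. Since $n\geq 2$, pick $i\neq j$ and set $w=\frac{e_i+e_j}{2}$; then $(w|\underline{1})=1$, so $w\in F_1$, and $(Aw|w)=\frac{d(x_i,x_j)^p}{2}>0$ because $x_i\neq x_j$. Hence $A$ is not of negative type on $\mathbb{R}^n$. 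Because $(X,d)$ is of strict $p$-negative type, Corollary 3.2 gives that $A$ is nonsingular with $(A^{-1}\underline{1}|\underline{1})\neq 0$, and by Theorem 3.1, part 1 (applied with $u=\underline{1}$) this is exactly the statement that $A$ is of strict negative type on $F=\{x\in\mathbb{R}^n\mid x_1+\dots+x_n=0\}$. Finally I would observe that $\Gamma_A(F)$ coincides with the $p$-negative type gap $\Gamma$ of $X$: the defining inequality $\frac{\Gamma}{2}\left(\sum_i|\alpha_i|\right)^2+\sum_{i,j}\alpha_i\alpha_jd(x_i,x_j)^p\leq 0$ over tuples with $\sum_i\alpha_i=0$ is literally $\frac{\Gamma}{2}\|x\|_1^2+(Ax|x)\leq 0$ over $x\in F$. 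So Theorem 3.3 applies and yields $\Gamma=\frac{2}{\beta}$ with three expressions for $\beta$; it remains to rewrite each.

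For part 1 I would use that with $u=\underline{1}$ one has $\mathrm{supp}\,u=\{1,\dots,n\}$ and no index outside the support, so, as already noted in the text preceding the theorem, $o(y)=\max_{i,j}\frac{|y_i-y_j|}{2}$. Thus $o(Ax)\leq 1$ is equivalent to $|(Ax)_i-(Ax)_j|\leq 2$ for all $i,j$, i.e. $|(Ax\mid e_i-e_j)|\leq 2$, while $x\in F$ means $x_1+\dots+x_n=0$. Substituting these into Theorem 3.3, part 1 gives the stated formula for $\beta$. For part 2, Theorem 3.3, part 2 gives $\beta=\max_{x\in\{-1,1\}^n}(Bx|x)$ with $B=(A^{-1}\underline{1}|\underline{1})^{-1}(A^{-1}\underline{1})(A^{-1}\underline{1})^T-A^{-1}$, which is exactly the matrix in the statement. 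For the extra identity $\beta=4\max_{x\in\{0,1\}^n}(Bx|x)$, recall from the construction preceding Theorem 3.3 that $\ker B=[u]=[\underline{1}]$, so $B\underline{1}=0$ and hence $(B(x+\underline{1})\mid x+\underline{1})=(Bx|x)$ for every $x$. A vector $x$ runs through $\{-1,1\}^n$ precisely when $x+\underline{1}$ runs through $\{0,2\}^n$, i.e. when $y:=\tfrac{1}{2}(x+\underline{1})$ runs through $\{0,1\}^n$, and $(Bx|x)=(B(2y)\mid 2y)=4(By|y)$; taking maxima finishes it. Part 3 is just Theorem 3.3, part 3 quoted verbatim.

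I do not expect a genuine obstacle: the whole statement is a dictionary translation of Theorems 3.1--3.3, where all the substantive analysis (strict negative type $\Leftrightarrow$ nonsingularity plus $(A^{-1}u|u)\neq 0$, the dual‑norm computations of Lemma 3.3, and the reduction to $(Bx|x)$ on $\{-1,1\}^n$ and to $\|B\|$) has already been done. The only mildly delicate points are the explicit form $o(x)=\max_{i,j}\frac{|x_i-x_j|}{2}$ of the oscillation semi‑norm when $u=\underline{1}$, and the affine change of variables $x\mapsto\tfrac12(x+\underline{1})$ which is legitimate exactly because $B\underline{1}=0$.
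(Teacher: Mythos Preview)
Your proposal is correct and follows essentially the same approach as the paper: the paper's proof of this theorem is the short paragraph immediately preceding its statement, which specializes Theorem~3.3 to $u=\underline{1}$, checks the hypotheses via Corollary~3.2 and the vector $w=\tfrac{e_i+e_j}{2}$, notes that $o(x)=\max_{i,j}\frac{|x_i-x_j|}{2}$, and uses the bijection between $\{-1,1\}^n$ and $\{0,2\}^n$ (together with $\ker B=[\underline{1}]$) for the $\{0,1\}^n$ formula. You have simply spelled these steps out in more detail.
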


\begin{cor}
Let $n$ be a natural number greater or equal to $3$ and let $C_n$ be
the cycle graph with $n$ vertices, viewed as a finite metric space,
equipped with the usual path metric. Then we have
\begin{enumerate}
\item $C_n$ is of 1-negative type and of strict 1-negative type if
and only if $n$ is odd.
\item The 1-negative type gap $\Gamma$ of $C_n$ is given by
    \begin{displaymath}
    \Gamma=\left\{\begin{array}{ll}
    0, & n \ \textnormal{even}\\
    \frac{1}{2}\frac{n}{n^2-2n-1}, & n \ \textnormal{odd}.
    \end{array}\right.
    \end{displaymath}
\end{enumerate}
\end{cor}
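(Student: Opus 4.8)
\emph{Proof proposal.} The plan is to treat the even and odd cases by quite different arguments, since their behaviour is opposite.

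For even $n=2m$ I would first check that $C_n$ is of $1$-negative type by writing its distance matrix $A=(d(x_i,x_j))_{i,j=1}^{n}$ as a sum of cut semi-metrics: with $S_k=\{k+1,k+2,\ldots,k+m\}$ (indices read mod $n$), $k=0,1,\ldots,m-1$, one checks $d(x_a,x_b)=\#\{k:S_k\ \text{separates}\ a\ \text{and}\ b\}$, and for each $k$ the corresponding cut matrix $\delta^{(k)}$ satisfies $(\delta^{(k)}\alpha|\alpha)=-2\big(\sum_{a\in S_k}\alpha_a\big)^{2}\le0$ for every $\alpha$ in $F=\{x:(x|\underline{1})=0\}$; summing over $k$ gives $(A\alpha|\alpha)\le0$ on $F$. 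To get $\Gamma=0$ it then suffices to produce one nonzero $\alpha$ in $F$ with $(A\alpha|\alpha)=0$, and $\alpha=e_1-e_2+e_{m+1}-e_{m+2}$ works (a one-line computation using translation invariance of $d$). This vector also shows that $C_n$ is not of strict $1$-negative type, and $\Gamma=0$ is then immediate from the definition of the gap.

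For odd $n=2m+1$ the matrix $A$ is the circulant matrix with first row $(c_0,\ldots,c_{n-1})$, $c_l=\min(l,n-l)$, and $\lambda_0:=\sum_{l}c_l=m(m+1)$, so $A\underline{1}=\lambda_0\underline{1}$. I would guess the inverse explicitly: let $B$ be the circulant matrix whose first row equals $-\tfrac{4}{n}$ in every coordinate, except $2-\tfrac{4}{n}$ in coordinate $0$ and $1-\tfrac{4}{n}$ in coordinates $m$ and $m+1$; then I claim $A^{-1}=\tfrac{1}{n\lambda_0}\underline{1}\,\underline{1}^{T}-B$. As all matrices involved are circulant, this reduces to the scalar identity $2c_l+c_{l-m}+c_{l-m-1}=n-\delta_{l,0}$ for all $l$ in $\mathbb{Z}_n$, which follows from a short case distinction on the position of $l$ relative to $0,m,m+1$. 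Granting this, $B\underline{1}=0$ gives $A^{-1}\underline{1}=\lambda_0^{-1}\underline{1}$, so $(A^{-1}\underline{1}|\underline{1})=n/\lambda_0\neq0$ and the matrix $B$ above is exactly the matrix $B$ occurring in Theorem 3.5. The eigenvalues of the explicit circulant $B$ are $0$ (on $\underline{1}$) and $4\cos^{2}(\pi mk/n)>0$ for $k=1,\ldots,n-1$, so $B$ is positive semi-definite with kernel $[\underline{1}]$; since $A^{-1}$ coincides with $-B$ on $F$, the restriction of $A$ to $F$ is negative definite, i.e. $C_n$ is of strict $1$-negative type. This settles part 1.

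For part 2 in the odd case I would invoke Theorem 3.5, part 2: $\Gamma=2/\beta$ with $\beta=4\max_{x\in\{0,1\}^{n}}(Bx|x)$. Writing $x$ as the indicator of a set $S\subseteq\mathbb{Z}_n$, the shape of $B$ gives $(Bx|x)=2|S|+2\,|S\cap(S-m)|-\tfrac{4}{n}|S|^{2}$. Since $\gcd(m,n)=1$ (because $n=2m+1$), the map $b\mapsto b+m$ is an $n$-cycle on $\mathbb{Z}_n$, so after reindexing $|S\cap(S-m)|$ becomes the number of adjacent pairs inside a subset of size $s=|S|$ of a cycle on $n$ points, whose maximum is $s-1$ for $1\le s\le n-1$ (attained by an arithmetic progression of common difference $m$). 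Hence $\beta=4\max_{1\le s\le n-1}\big(4s-2-\tfrac{4}{n}s^{2}\big)$; the parabola $s\mapsto 4s-2-\tfrac{4}{n}s^{2}$ has vertex $s=n/2$, so, $n$ being odd, its maximum over the relevant integers is attained at $s=m$ (and $s=m+1$) with value $\tfrac{4m^{2}-2}{2m+1}=\tfrac{n^{2}-2n-1}{n}$. Therefore $\beta=\tfrac{4(n^{2}-2n-1)}{n}$ and $\Gamma=\tfrac12\cdot\tfrac{n}{n^{2}-2n-1}$, as claimed. I expect the main obstacle to be precisely this last combinatorial maximisation — seeing the reduction to counting cycle-adjacencies and that the optimal subset has size $(n\pm1)/2$; finding the closed form of $A^{-1}$ is the other slightly delicate point, though once the candidate $B$ is written down its verification is the elementary identity displayed above.
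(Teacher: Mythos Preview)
Your proposal is correct and, for the heart of the matter (the odd-$n$ gap), follows the same route as the paper: compute the matrix $B$ of Theorem~3.5 explicitly and maximise $(Bx|x)$ over $\{0,1\}^n$. Where the paper outsources part~1 (embedding $C_n$ in a circle and quoting results of Hjorth et al.) and the formula for $A^{-1}$ (quoting Bapat--Kirkland--Neumann), you supply direct arguments: a cut decomposition for even $n$, the circulant identity $2c_l+c_{l-m}+c_{l-m-1}=n-\delta_{l,0}$ to verify $A^{-1}$, and the eigenvalues $4\cos^{2}(\pi mk/n)$ of $B$ for strictness in the odd case. Your reduction of $|S\cap(S-m)|$ to counting cycle-adjacencies via the bijection $b\mapsto b+m$ is a clean way to get both the bound $s-1$ and the optimiser (an arc of length $m$ or $m+1$); the paper instead bounds the cross-term crudely by $s-1$ and then writes down an explicit $\overline{x}$ attaining it.

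One step to make explicit: from ``$A^{-1}$ coincides with $-B$ on $F$ and $B$ is positive definite there'' you infer ``$A$ is negative definite on $F$''. That implication is not valid for a general subspace, but it holds here because $A\underline{1}=\lambda_0\underline{1}$ makes $F=\underline{1}^{\perp}$ an $A$-invariant subspace, so $(A|_F)^{-1}=(A^{-1})|_F$ and the signs transfer. Once you note this (or alternatively, once you observe that your cut decomposition for even $n$ also works for odd $n$ with $m$ ``half-cycle'' cuts giving $d(a,b)$ or $d(a,b)\pm$ a correction, so that $1$-negative type is already in hand and Corollary~3.2 applies), the argument is complete.
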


\begin{proof}
Take the vertices $\{x_1, x_2, \ldots,x_n\}$ of a regular $n$-gon on
a circle $C$ of radius $\displaystyle r=\frac{n}{2\pi}$. It is
evident, that $C_n$ can be viewed as the subspace
$\{x_1,x_2,\ldots,x_n\}$ of the metric space $(C,d)$, where $d$ is
the arc-length metric on $C$. It is shown in [4] (see Theorem 4.3
and Theorem 9.1) that $(C,d)$ is of 1-negative type and a finite
subspace of $(C,d)$ is of strict 1-negative type if and only if this
subspace contains at most one pair of antipodal points. Hence part 1
follows at once. The definition of $\Gamma$ implies that $\Gamma=0$
if $n$ is even, so
let us assume that $n=2k+1$, for some $k$ in $\mathbb{N}$.\\

Let $A$ be the distance matrix of $C_n=C_{2k+1}$. It is shown in [1]
(Theorem 3.1) that $A^{-1}$ is given by
    \begin{displaymath}
    A^{-1}=-2I-C^k-C^{k+1}+\frac{2k+1}{k(k+1)} \underline{1} \ \underline{1}^T,
    \end{displaymath}
where $I$ is the identity matrix and $C$ is the matrix (with respect
to the canonical bases) of the linear map on $\mathbb{R}^n$, which
sends each $x^T=(x_1,x_2,\ldots,x_n)$ to $(x_2,x_3,\ldots,x_n,x_1)$.
Now $\displaystyle
A^{-1}\underline{1}=\frac{1}{k(k+1)}\underline{1}$ and so $B$ (as
defined in Theorem 3.4) is given by $\displaystyle
B=2I+C^k+C^{k+1}-\frac{4}{2k+1}\underline{1} \ \underline{1}^T$ and
so
    \begin{displaymath}
    (Bx|x)=2\|x\|^2_2-\frac{4}{2k+1}(x|\underline{1})^2+2(x_1x_{k+1}+\ldots+x_{k+1}x_{2k+1}+x_{k+2}x_1+\ldots+x_{2k+1}x_k),
    \end{displaymath}
for each $x^T=(x_1,x_2,\ldots,x_{2k+1})$ in $\mathbb{R}^{2k+1}$.\\

Now let $x$ be in $\{0,1\}^{2k+1}$ and let $s=|\{1\leq
i\leq2k+1|x_i=1\}|$. In the case $s=0$ and $s=2k+1$ we get
$(Bx|x)=0$, so assume that $1\leq s\leq2k$. Since
    \begin{displaymath}
    x_1x_{k+1}+\ldots+x_{k+1}x_{2k+1}+x_{k+2}x_1+\ldots+x_{2k+1}x_k<\|x\|^2_2=s,
    \end{displaymath}
we get
    \begin{displaymath}
    (Bx|x)\leq2s-\frac{4}{2k+1}s^2+2(s-1)=2\left(2s-1-\frac{2}{2k+1}s^2\right).
    \end{displaymath}
It follows immediately, that
    \begin{displaymath}
    \max_{1\leq s\leq2k}\left(2s-1-\frac{2}{2k+1}s^2\right)=\frac{2k^2-1}{2k+1}
    \end{displaymath}
and hence
    \begin{displaymath}
    \max_{x\in\{0,1\}^{2k+1}}(Bx|x)\leq\frac{4k^2-2}{2k+1}.
    \end{displaymath}
On the other hand define $\overline{x}$ in $\{0,1\}^{2k+1}$ as
$\overline{x}^T=(\alpha_1,\alpha_2,\ldots,\alpha_{2k+1})$, with
$\alpha_i=1$ if and only if
$i\in\{1,2,\ldots,m,2m+1,2m+2,\ldots3m+1\}$ if $k=2m$ and
$\alpha_i=1$ if and only if
$i\in\{1,2,\ldots,m,2m+2,2m+3,\ldots,3m+2\}$ if $k=2m+1$, $m$ in
$\mathbb{N}$. In each case $(k=2m, 2m+1)$ we get $\displaystyle
(B\overline{x}|\overline{x})=\frac{4k^2-2}{2k+1}$. Summing up we
have $\displaystyle
\max_{x\in\{0,1\}^{2k+1}}(Bx|x)=\frac{4k^2-2}{2k+1}$ and hence
Theorem 3.4, part 2 implies
$\displaystyle\Gamma=\frac{2k+1}{8k^2-4}=\frac{1}{2}\frac{n}{n^2-2n-1}$.
\end{proof}

\begin{cor}
(=Theorem 3.2 of [9]) Let $(X,d)$ be a finite discrete space
consisting of $n$ points, $n\geq2$. The 1-negative type gap $\Gamma$
of $X$ is given by
    \begin{displaymath}
    \Gamma=\frac{1}{2}\left(\frac{1}{\lfloor\frac{n}{2}\rfloor}+\frac{1}{\lceil\frac{n}{2}\rceil}\right).
    \end{displaymath}
\end{cor}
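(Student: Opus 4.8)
The plan is to apply Theorem 3.4 directly to the distance matrix of the discrete space. Write $X=\{x_1,\dots,x_n\}$ with $d(x_i,x_j)=1$ for $i\neq j$; then $A=(d(x_i,x_j))_{i,j=1}^n=\underline{1}\,\underline{1}^T-I$, where $I$ is the $n\times n$ identity. First I would record that $(X,d)$ is of strict $1$-negative type: for $x$ in $F=\{x\in\mathbb{R}^n|(x|\underline{1})=0\}$ one has $(Ax|x)=(x|\underline{1})^2-\|x\|_2^2=-\|x\|_2^2<0$ for $x\neq0$, while $(Aw|w)=\tfrac12>0$ for $w=\tfrac12(e_1+e_2)$ in $F_1$; alternatively this is exactly the content of Corollary 3.2 once the next step checks that $A$ is nonsingular with $(A^{-1}\underline{1}|\underline{1})\neq0$.

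The computational heart is to invert $A$ and form the matrix $B$ of Theorem 3.4. Searching for $A^{-1}$ of the shape $a\,\underline{1}\,\underline{1}^T+bI$ and using $(\underline{1}\,\underline{1}^T)^2=n\,\underline{1}\,\underline{1}^T$, one finds $A^{-1}=\tfrac{1}{n-1}\underline{1}\,\underline{1}^T-I$. Hence $A^{-1}\underline{1}=\tfrac{1}{n-1}\underline{1}$ and $(A^{-1}\underline{1}|\underline{1})=\tfrac{n}{n-1}\neq0$, so Theorem 3.4 applies; substituting into $B=(A^{-1}\underline{1}|\underline{1})^{-1}(A^{-1}\underline{1})(A^{-1}\underline{1})^T-A^{-1}$ and simplifying gives the clean expression $B=I-\tfrac1n\,\underline{1}\,\underline{1}^T$ (consistent with $\ker B=[\underline{1}]$, as it must be; $B$ is the orthogonal projection onto $F$).

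With $B$ in hand, Theorem 3.4, part 2, gives $\Gamma=\tfrac{2}{\beta}$ with $\beta=4\max_{x\in\{0,1\}^n}(Bx|x)$. For $x$ in $\{0,1\}^n$ having exactly $s$ coordinates equal to $1$ we get $(Bx|x)=\|x\|_2^2-\tfrac1n(x|\underline{1})^2=s-\tfrac{s^2}{n}=\tfrac{s(n-s)}{n}$, a concave function of $s\in\{0,1,\dots,n\}$ which is maximized at $s=\lfloor n/2\rfloor$ (equivalently $s=\lceil n/2\rceil$) with value $\tfrac{1}{n}\lfloor n/2\rfloor\lceil n/2\rceil$; the endpoints $s=0$ and $s=n$ give $(Bx|x)=0$ and so are irrelevant. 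Therefore $\beta=\tfrac{4}{n}\lfloor n/2\rfloor\lceil n/2\rceil$, and since $\lfloor n/2\rfloor+\lceil n/2\rceil=n$,
\[
\Gamma=\frac{2}{\beta}=\frac{n}{2\lfloor n/2\rfloor\lceil n/2\rceil}=\frac{1}{2}\cdot\frac{\lfloor n/2\rfloor+\lceil n/2\rceil}{\lfloor n/2\rfloor\lceil n/2\rceil}=\frac{1}{2}\left(\frac{1}{\lfloor n/2\rfloor}+\frac{1}{\lceil n/2\rceil}\right),
\]
which is the asserted value. There is essentially no genuine obstacle: the only steps demanding a little care are the inversion of $A=\underline{1}\,\underline{1}^T-I$ and the cancellations producing $B=I-\tfrac1n\,\underline{1}\,\underline{1}^T$; everything after that is the elementary maximization of $s(n-s)$ over integers.
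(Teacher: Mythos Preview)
Your proof is correct and follows essentially the same approach as the paper: both compute $A=\underline{1}\,\underline{1}^T-I$, $A^{-1}=\tfrac{1}{n-1}\underline{1}\,\underline{1}^T-I$, and $B=I-\tfrac{1}{n}\underline{1}\,\underline{1}^T$, and then evaluate $\beta$ via Theorem~3.4, part~2. The only cosmetic difference is that the paper maximizes over $\{-1,1\}^n$ (obtaining $\beta=n-\tfrac{1}{n}\min_{x\in\{-1,1\}^n}(x|\underline{1})^2$, hence $\beta=n$ or $n-\tfrac{1}{n}$ according to parity), whereas you use the equivalent $\{0,1\}^n$ formulation and maximize $s(n-s)/n$ over integers~$s$; these are the same computation in different coordinates.
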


\begin{proof}
Let $A$ be the distance matrix of $X$. We have $A=\underline{1} \
\underline{1}^T-I$ ($I$ the identity matrix) and hence
$\displaystyle A^{-1}=\frac{1}{n-1}\underline{1} \
\underline{1}^T-I$. So the matrix $B$ defined as in Theorem 3.4 is
given by $\displaystyle B=I-\frac{1}{n}\underline{1} \
\underline{1}^T$. Applying Theorem 4, part 2 we get
    \begin{displaymath}
    \beta=\max_{x\in\{-1,1\}^n}(Bx|x)=n-\frac{1}{n}\min_{x\in\{-1,1\}^n}(x|\underline{1})^2=\left\{\begin{array}{ll}
    n, & n \ \textnormal{even}\\
    n-\frac{1}{n}, & n \ \textnormal{odd}
    \end{array}\right.
    \end{displaymath}
and so $\displaystyle
\Gamma=\frac{2}{\beta}=\frac{1}{2}\left(\frac{1}{\lfloor\frac{n}{2}\rfloor}+\frac{1}{\lceil\frac{n}{2}\rceil}\right)$.
\end{proof}

Recall that for a given finite connected simple graph $G=(V,E)$ and
a given collection $\{w(e),e\in E\}$ of positive weights associated
to the edges of $G$, the graph $G$ becomes a finite metric space,
where the metric is given by the natural weighted path metric on
$G$. A finite metric tree $T=(V,E)$ is a finite connected simple
graph that has no cycles, endowed with the above given edge weighted
path metric. It is shown in [4] (Corollary 7.2) that metric trees
are of strict 1-negative type.\\

\begin{cor}
(=Corollary 4.14 of [2]). Let $T=(V,E)$ be a finite metric tree. The
1-negative type gap $\Gamma$ of $G$ is given by $\displaystyle
\Gamma=\left(\sum_{e\in E}\frac{1}{w(e)}\right)^{-1}$, where $w(e)$
denotes the weight of the edge $e$.
\end{cor}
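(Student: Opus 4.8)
The plan is to specialize the $p$-negative type gap formula for finite metric spaces. Take $A=(d(x_i,x_j))_{i,j=1}^n$ for the distance matrix of $T$ and $F=\{x\in\mathbb{R}^n:\sum_{i=1}^n x_i=0\}$. Since metric trees are of strict $1$-negative type, Theorem 3.5, part 1 gives $\Gamma=2/\beta$ with
\[
\beta=\sup\bigl\{(-Ax|x):x\in F,\ |(Ax)_i-(Ax)_j|\le 2\ \text{for all}\ i,j\bigr\}.
\]
Everything rests on the cut decomposition of a tree metric: for each edge $e\in E$, deleting $e$ disconnects $T$ into two components; let $\chi_e\in\{0,1\}^n$ be the indicator of one of them. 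Then $\chi_e(i)\ne\chi_e(j)$ exactly when $e$ lies on the unique path from $x_i$ to $x_j$, so $d(x_i,x_j)=\sum_{e\in E}w(e)\bigl(\chi_e(i)-\chi_e(j)\bigr)^2$.

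First I would feed this into the quadratic form: for $x\in F$ the sums $\sum_{i,j}x_ix_j\chi_e(i)^2$ and $\sum_{i,j}x_ix_j\chi_e(j)^2$ vanish because $\sum_i x_i=0$, so $(-Ax|x)=2\sum_{e\in E}w(e)(x|\chi_e)^2$. A parallel computation controls $Ax$ itself: for $e=\{p,q\}$, with $\chi_e$ chosen as the indicator of the component containing $p$, the difference $d(x_p,x_k)-d(x_q,x_k)$ equals $-w(e)$ when $x_k$ lies on $p$'s side of $e$ and $+w(e)$ otherwise, so summing against $x_k$ and using $\sum_k x_k=0$ yields $(Ax)_p-(Ax)_q=-2w(e)(x|\chi_e)$. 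Writing $g:=Ax$, this reads $(x|\chi_e)=(g_q-g_p)/(2w(e))$, whence $w(e)(x|\chi_e)^2=(g_p-g_q)^2/(4w(e))$, and the two identities combine to
\[
(-Ax|x)=\tfrac12\sum_{e=\{p,q\}\in E}\frac{(g_p-g_q)^2}{w(e)},
\]
while the constraint $|(Ax)_i-(Ax)_j|\le 2$ for all $i,j$ is precisely $\max_i g_i-\min_i g_i\le 2$.

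Next I would drop the requirement that $g$ be of the form $Ax$ with $x\in F$. Since $A$ is nonsingular (Corollary 3.2), $AF$ is an $(n-1)$-dimensional subspace, and $\underline{1}\notin AF$: otherwise $\underline{1}=Ax$ for some $x\in F$, forcing $(A^{-1}\underline{1}|\underline{1})=(x|\underline{1})=0$, contradicting strict $1$-negative type. Hence $AF$ is a complement of $[\underline{1}]$ in $\mathbb{R}^n$. Both the objective and the constraint in the last display are unchanged under $g\mapsto g+\lambda\underline{1}$, so the supremum over $g\in AF$ equals the supremum over all $g\in\mathbb{R}^n$, giving
\[
\beta=\sup\Bigl\{\tfrac12\sum_{e=\{p,q\}\in E}\frac{(g_p-g_q)^2}{w(e)}:g\in\mathbb{R}^n,\ \max_i g_i-\min_i g_i\le 2\Bigr\}.
\]
For every edge $(g_p-g_q)^2\le(\max_i g_i-\min_i g_i)^2\le 4$, so $\beta\le 2\sum_{e\in E}1/w(e)$; and equality is attained because $T$, having no cycles, is bipartite, so setting $g$ equal to $0$ on one class and $2$ on the other class of its bipartition (both nonempty, as a tree with at least two vertices has an edge) makes every edge contribute its maximal value $4/w(e)$. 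Therefore $\beta=2\sum_{e\in E}1/w(e)$ and $\Gamma=2/\beta=\bigl(\sum_{e\in E}1/w(e)\bigr)^{-1}$.

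The step I expect to demand the most care is the bookkeeping for the two identities in the middle paragraph — keeping the orientation of each $\chi_e$ consistent between the computation of $(-Ax|x)$ and that of $(Ax)_p-(Ax)_q$, and confirming that the $\chi_e(i)^2$-terms drop out — together with the passage in the third paragraph from $g=Ax$, $x\in F$, to an arbitrary $g\in\mathbb{R}^n$. Once the problem has been recast as maximizing $\tfrac12\sum_e(g_p-g_q)^2/w(e)$ over vertex functions of oscillation at most $2$, the evaluation is immediate, and it makes transparent the striking feature that $\Gamma$ depends only on the edge weights.
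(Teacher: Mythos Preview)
Your proof is correct and follows the same overall strategy as the paper: apply Theorem~3.5, part~1, reduce $(-Ax\mid x)$ to the discrete Dirichlet energy $\tfrac12\sum_{e=\{p,q\}}(g_p-g_q)^2/w(e)$ with $g=Ax$, and saturate the bound via a bipartition of $T$. The technical route differs in two places. First, the paper obtains the Dirichlet-energy identity by showing that the vectors $(2w(e))^{-1/2}(e_i-e_j)$, one for each edge $e=\{i,j\}$ oriented by a fixed $2$-colouring, form an orthonormal basis of $F$ for $(\cdot\mid\cdot)_{-A}$ and then invoking Parseval; you reach the same identity by the cut decomposition $d(x_i,x_j)=\sum_e w(e)(\chi_e(i)-\chi_e(j))^2$ and a direct expansion. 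Second, for the extremizer the paper stays in $F$, sets $\bar x=\sum_e z_e/w(e)$, and must verify $|(A\bar x\mid e_r-e_s)|\le 2$ for \emph{all} pairs $r,s$ via an alternating-sign argument along the path from $r$ to $s$; your change of variables to $g$, together with the observation that $\mathbb{R}^n=AF\oplus[\underline{1}]$ and that both objective and constraint are translation-invariant, lets you take $g\in\{0,2\}^n$ given by the bipartition, so the oscillation constraint is satisfied by inspection. Your packaging is a bit cleaner on the lower bound; the paper's packaging makes the inner-product geometry on $F$ more explicit.
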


\begin{proof}
Let $V=\{1,2,\ldots,n\}$ be the set of vertices of $T$. Recall
$A=(d(i,j))^n_{i,j=1}$ be the distance matrix of $T$ and
$(x|y)_{-A}=(-Ax|y)$ be the inner product determined by $A$ on
$F=\{x\in\mathbb{R}^n|x_1+x_2+\ldots+x_n=0\}$. Now take a
2-colouring of the vertices of $T$ by colours $a,b$ and define for
each $e$ in $E$ $z_e=e_i-e_j$, where $i$ and $j$ are adjacent
vertices in $V$ connected by $e$ and $i$ has colour $a$ and $j$ has
colour $b$.\ Note that $z_e$ in $F$ and $\|z_e\|^2_{-A}=2w(e)$.
Since each vertices $r,s$ in $V$ are connected by exactly one path
in $T$, it follows immediately, that
    \begin{displaymath}
    (z_e|e_r-e_s)_{-A}=(d(j,r)-d(i,r))-(d(j,s)-d(i,s))=0,
    \end{displaymath}
if $e$ is not lying on the path connecting $r$ and $s$ and
$(z_e|e_r-e_s)_{-A}\in\{2w(e),-2w(e)\}$ if $e$ is lying on the path
connecting $r$ and $s$, where the value $2w(e)$ is obtained if and
only if $d(j,r)>d(i,r)$. From these observations and $|E|=|V|-1$ it
follows, that $\displaystyle \{x_e=(2w(e))^{-\frac{1}{2}}z_e,e\in
E\}$ is an
orthonormal basis of $F$ with respect to $(.|.)_{-A}$.\\
Now let $x$ be in $F$.
    \begin{displaymath}
    (-Ax|x)=\|x\|^2_{-A}=\sum_{e\in E}|(x|x_e)_{-A}|^2=\sum_{e\in
    E}\frac{1}{2w(e)}|(Ax|e_i-e_j)|^2.
    \end{displaymath}
For $\beta=\sup\left\{(-Ax|x)|x\in F,|(Ax|e_r-e_s)|\leq2 \
\textnormal{for all}\ 1\leq r,s\leq n\right\}$ we get
    \begin{displaymath}
    \beta\leq\sum_{e\in E}\frac{1}{2w(e)}\cdot4=2\sum_{e\in
    E}\frac{1}{w(e)}.
    \end{displaymath}
On the other hand let $\displaystyle \overline{x}=\sum_{e\in
E}\frac{z_e}{w(e)}$ in $F$ and take some $r\neq s$ in $V$. From
above we know, that
    \begin{displaymath}
    (\overline{x}|e_r-e_s)_{-A}=\sum_{e\in
    E}\frac{1}{w(e)}(z_e|e_r-e_s)_{-A}=\sum_{e\in
    P}\frac{1}{w(e)}(z_e|e_r-e_s)_{-A},
    \end{displaymath}
where $P$ is the set of edges lying on the path connecting $r$ and
$s$. By definition of $z_e$ and since colours $a$ and $b$ are
changing each step running from $r$ to $s$, we get
    \begin{displaymath}
    \sum_{e\in P}\frac{1}{w(e)}(z_e|e_r-e_s)_{-A}=2-2+2-+\ldots
    \in\{0,2\}
    \end{displaymath}
or
    \begin{displaymath}
    \sum_{e\in P}\frac{1}{w(e)}(z_e|e_r-e_s)_{-A}=-2+2-2+-\ldots \in
    \{0,-2\}.
    \end{displaymath}
Hence $|(A\overline{x}|e_r-e_s)|=|(\overline{x}|e_r-e_s)_{-A}|\in
\{0,2\}$, for each $r$ and $s$ in $V$ and so
    \begin{displaymath}
    \beta\geq(-A\overline{x}|\overline{x})=\|\overline{x}\|^2_{-A}=\sum_{e\in
    E}|(\overline{x}|x_e)_{-A}|^2=\sum_{e\in
    E}\frac{|(z_e|x_e)_{-A}|^2}{w(e)^2}=2 \sum_{e\in E}\frac{1}{w(e)}.
    \end{displaymath}
Applying Theorem 3.4, part 1 we get
    \begin{displaymath}
    \Gamma=\frac{2}{\beta}=\left(\sum_{e\in
    E}\frac{1}{w(e)}\right)^{-1}.
    \end{displaymath}
\end{proof}

We conclude this paper with the following

\begin{rem}
It is shown in [1] (Theorem 2.1) that the inverse matrix $A^{-1}$ of
the distance matrix $A$ of a finite metric tree is given by
    \begin{displaymath}
    A^{-1}=-\frac{1}{2}L+\left(2\sum_{e\in
    E}w(e)\right)^{-1}\delta\delta^T,
    \end{displaymath}
where $L$ denotes the Laplacian matrix for the weighting of $T$ that
arises by replacing each edge weight by its reciprocal and $\delta$
in $\mathbb{R}^n$ is given by
$\delta^T=(\delta_1,\delta_2,\ldots,\delta_n)$ with
$\delta_i=2-d(i)$, $d(i)$ denotes the degree of the vertex $i$. It
follows easily that the matrix $B$ defined as in Theorem 3.4 is
given by $\displaystyle B=\frac{1}{2}L$. Routine calculations show,
that
    \begin{displaymath}
    (Bx|x)\leq2\sum_{e\in E}\frac{1}{w(e)}, \quad
     \textnormal{for all} \
    x \ \textnormal{in} \ \{-1,1\}^n.
    \end{displaymath}
Moreover we get $\displaystyle
(B\overline{x}|\overline{x})=2\sum_{e\in E}\frac{1}{w(e)}$, for
$\overline{x}^T=(x_1,x_2,\ldots,x_n)$ in $\{-1,1\}^n$, a 2-colouring
of the vertices $1,2,\ldots,n$. So again by Theorem 3.4, part 2 we
get $\displaystyle \Gamma=\left(\sum_{e\in E}
\frac{1}{w(e)}\right)^{-1}$.
\end{rem}

\end{document}